\author{Johan Andersson\thanks{Email:johan.andersson@oru.se \, Address: Department of Mathematics, School of Science and Technology, {\"O}rebro University, {\"O}rebro, SE-701 82 Sweden. } \and Linnea Rousu\thanks{Email:linnea.rousu@hotmail.com}}
\title{Polynomial approximation avoiding values in countable sets}
\theoremstyle{plain} 
\newtheorem{thm}{Theorem}  
\newtheorem{lem}{Lemma}
\newtheorem{prop}{Proposition}
\newtheorem{cor}{Corollary} 
\theoremstyle{definition} 
\newtheorem{prob}{Problem}
\newtheorem{conj}{Conjecture}
\newtheorem{ques}{Question}
\date{}
\def\cprime{$'$}
\newcommand{\C}{{\mathbb C}} 
\newcommand{\Q}{{\mathbb Q}}
\newcommand{\abs}[1]{{\left| {#1} \right|}}
\renewcommand{\emptyset}{\varnothing}
\newcommand{\eps}{\varepsilon}
\begin{document}

\maketitle  
      
\begin{abstract} 
We generalize a version of Lavrent\'ev's theorem which says that a function that is continuous on a compact set $K$ with connected complement and without interior points can be uniformly approximated as closely as desired by a polynomial without zeros on the set $K$, so that the polynomial can avoid values from any given countable set. We also prove a corresponding version of Mergelyan's theorem when the interior of $K$ is a finite union of Jordan domains, pairwise separated by a positive distance.
\end{abstract}
   
\maketitle
\section{Introduction}
Motivated by the Voronin universality theorem  for the Riemann zeta-function, the first author  \cite{Andersson}, \cite{Andersson2} generalized    the Lavrent\'ev theorem \cite{Lavrentiev} and the Mergelyan theorem \cite{Mergelyan}  so that in certain cases the approximating polynomial may be assumed to be zero-free on the compact set $K$. In particular the following conjecture\footnote{Gauthier (unpublished) also considered this problem in the seventies, see \cite[Remark 1]{Andersson2}.} \cite[Conjecture 2]{Andersson2} was stated
\begin{conj} \label{conj1}
  Assume that $K$ is a compact set with connected complement and that $f$ is a continuous function on $K$ which is analytic in the interior of $K$, such that $f$ is zero-free on  $K^\circ$.  Then given any $\varepsilon>0$ there exists a polynomial $p$ which is zero-free on $K$ such that
  \begin{gather} \label{ii}
    \max_{z \in K} \abs{f(z)-p(z)}<\varepsilon.
   \end{gather}
\end{conj}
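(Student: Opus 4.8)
The plan is to isolate the one genuinely robust mechanism for producing zero-free polynomial approximants and then reduce the general situation to it. The robust mechanism is the following: if $g$ is continuous on $K$, analytic in $K^\circ$, and zero-free on \emph{all} of $K$, then $g$ admits a zero-free polynomial approximation. Since the complement of $K$ is connected, $K$ has no bounded complementary components, so the zero-free map $g \colon K \to \C \setminus \{0\}$ has a continuous logarithm $g = e^{G}$ with $G$ continuous on $K$ and analytic in $K^\circ$. Applying the classical Mergelyan theorem to $G$ gives a polynomial $P$ with $\max_{z\in K}\abs{G(z)-P(z)}$ small, whence $e^{P}$ is a zero-free entire function uniformly close to $g$ on $K$. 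Finally I would replace $e^{P}$ by the polynomial $(1+P/M)^{M}$: since $P$ is bounded on $K$, for $M$ larger than $\max_{z \in K}\abs{P(z)}$ the factor $1+P/M$ is zero-free on $K$, so $(1+P/M)^{M}$ is a zero-free polynomial, and it converges to $e^{P}$ uniformly on $K$ as $M \to \infty$.

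With this lemma in hand, Conjecture~\ref{conj1} reduces to replacing $f$ by a function $\tilde f$, uniformly close to $f$, continuous on $K$, analytic in $K^\circ$, and zero-free on \emph{all} of $K$ rather than merely on $K^\circ$. The only obstruction to zero-freeness is the boundary: $f$ may vanish on $\partial K = K \setminus K^\circ$. Thus the whole problem is to produce a small perturbation $\eta$, analytic in $K^\circ$, with $f+\eta$ nowhere zero on $K$, equivalently $\eta(z) \neq -f(z)$ for every $z \in K$. Away from the boundary zeros $\abs{f}$ is bounded below on compact subsets of $K^\circ$, so there any uniformly small $\eta$ is harmless; the entire difficulty is concentrated near the zeros of $f$ on $\partial K$, where $\abs{f}$ is small and must nonetheless be ``cleared'' by $\eta$.

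Here is where I expect the real obstacle, and why the statement is posed as a conjecture. A constant perturbation $\eta \equiv c$ cannot work in general: the set $f(K)$ is a continuum containing $0$ that, for non-constant $f$, generally surrounds $0$, so every small $-c$ already lies in $f(K)$ and recreates a zero. One is therefore forced to build a genuinely non-constant $\eta$, analytic in $K^\circ$, that is uniformly tiny yet avoids $-f$ simultaneously at every boundary point where $f$ degenerates. When $K^\circ$ is a finite union of Jordan domains separated by a positive distance one can hope to do this component by component, using conformal maps to place the perturbation and then patching across the gaps that the positive separation guarantees --- and this is precisely the regime in which the Mergelyan-type theorem of the paper is established. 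For an arbitrary compact $K$ with connected complement, however, the interior may have infinitely many components accumulating on a wild boundary, and there is no evident way to choose a single uniformly small analytic perturbation that clears all boundary zeros at once while respecting the rigidity imposed by interior analyticity. Overcoming this --- essentially a global \emph{simultaneous} boundary-zero removal under an interior-holomorphy constraint --- is the crux that separates the proved special cases from the full conjecture.
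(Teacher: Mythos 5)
You were asked to prove Conjecture~\ref{conj1}, but this statement is not a theorem of the paper: it is an open conjecture, and the paper itself says so (the known results cover only special cases --- separated Jordan domains \cite{Andersson2}, chains \cite{GauthierKnese}, trees \cite{AnderssonGauthier}, locally connected $K$ \cite{Khru}, and Danielyan's result \cite{Dan} for \emph{some} $f$ with a prescribed boundary zero set --- and the case of sets like the Cornucopia remains unresolved). So there is no proof in the paper to compare against, and your proposal rightly does not claim to close the problem. Judged as partial work, what you do prove is correct. Your key lemma --- if $g$ is continuous on $K$, analytic in $K^\circ$, and zero-free on \emph{all} of $K$, then zero-free polynomial approximation is possible --- is sound: connectedness of $\C\setminus K$ forces $\check H^1(K;\mathbb{Z})=0$ (Alexander duality), so $g$ has a continuous logarithm $G$ on $K$, which is automatically analytic in $K^\circ$ since it locally differs from an analytic branch of $\log g$ by a constant; Mergelyan applies to $G$; and for $M>\max_{z\in K}\abs{P(z)}$ the polynomial $(1+P/M)^M$ is zero-free on $K$ and converges uniformly on $K$ to $e^P$. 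This reduction of the conjecture to ``perturb $f$ so that it is zero-free on all of $K$, not just $K^\circ$'' is indeed the standard entry point of this whole line of work, and your diagnosis that the entire difficulty is the simultaneous clearing of boundary zeros by a uniformly small perturbation that is analytic in $K^\circ$ is exactly the obstruction that keeps the conjecture open.

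Two remarks for accuracy. First, your aside that $f(K)$ is ``a continuum'' is loose ($K$ need not be connected), though your conclusion that constant shifts cannot work in general is correct. Second, in the regime the paper actually handles (Theorem~\ref{TH1}, whose $A=\{0\}$ case is \cite[Theorem 6]{Andersson2}), the mechanism is related to but not identical with your sketch: rather than adding a perturbation to clear boundary zeros, the paper pulls $f$ inward on each component of $K^\circ$ via rescaled Riemann maps $z\mapsto\phi_j^{-1}((1-\xi_j)\phi_j(z))$ (using Carath\'eodory's extension theorem) to get $g$ close to $f$ and zero-free on $\overline{K^\circ}$, then approximates $g$ by a polynomial $q$ via Mergelyan, and finally handles the remaining set $\partial K$ not by logarithms but by Lemma~\ref{LE2}, which iteratively moves the values of $q$ away from the forbidden set on $\partial K$ by perturbing the zeros of shifted polynomials. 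Your logarithm-based route can also be completed in that regime (one must additionally extend $g$ from $\overline{K^\circ}$ to $K$ avoiding $0$, which works because each $\overline{O_j}$ is simply connected and $\check H^1(K;\mathbb{Z})=0$), but it is a genuinely different finishing move from the paper's, and neither route, as you correctly conclude, is known to survive when the components of $K^\circ$ accumulate on a wild boundary.
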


While the conjecture in general seems quite difficult, it was proved in \cite{Andersson}  that the conjecture is true if the interior of $K$ is empty, and more generally in \cite[Theorem 6]{Andersson2} that the conjecture is true if the interior of $K$ is a finite union of Jordan domains, pairwise separated by a positive distance. The conjecture has since then been proved in increasing generality. Gauthier-Knese \cite{GauthierKnese} proved that the conjecture is true for ``chains of Jordan domains''. Khruschev \cite{Khru} proved the conjecture to be true if $K$ is locally connected. Andersson-Gauthier \cite{AnderssonGauthier} gave an independent proof\footnote{Actually a somewhat more general statement was proved, see e.g. \cite[Theorem 4]{AnderssonGauthier}.} of the ``trees of Jordan domains'' case. Furthermore it was proved by Danielyan \cite[Theorem 1]{Dan} that the conjecture is true for some $f$ with any given zero-set $Z \subset \partial K$. However he did not prove it for all functions with such a  zero-set $Z$ and the conjecture is thus still open for more complicated sets like the Cornucopia set\footnote{Example suggested by A. G. O-Farrell. See discussion in \cite[Section 4]{Andersson2}.}.

\section{Main results}

This paper deals with the generalization\footnote{Parts of the results in this paper  are included in the undergraduate thesis \cite{Rousu} written by the second author and supervised by the first author of the present paper.} of the zero-free approximation problem  to approximation by polynomials which avoids any countable set $A$ on the set $K$. Our main result is the following theorem. 
 \begin{thm} \label{TH1}
  Let $A \subset \C$ be any countable set and let $K \subset \C$ be  a compact set with connected complement, such that its interior  $K^\circ$ is the union of a finite number of Jordan domains, pairwise separated by a positive distance. Let $f$ be a continuous function on $K$ which is analytic in its interior $K^\circ$ such that $f(z) \not \in A$  if $z \in K^\circ$. Then  given any $\varepsilon>0$ there exists some polynomial $p$ such that $p(z) \not \in A$ if $z \in K$   and such that
  \[
    \max_{z \in K} \abs{f(z)-p(z)} <\varepsilon.
  \]
 \end{thm}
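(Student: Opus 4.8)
My plan is to combine the known zero-free Mergelyan theorem for this geometry, \cite[Theorem 6]{Andersson2}, with a Baire category argument carried out inside a \emph{finite-dimensional} space of polynomials, so that the object produced is genuinely a polynomial and not merely a uniform limit. The first reduction is that avoiding a single value is the zero-free problem after a translation: for $a\in\C$, a polynomial $p$ satisfies $p(z)\neq a$ on $K$ precisely when $p-a$ is zero-free on $K$, and $f-a$ is analytic and zero-free on $K^\circ$ whenever $f(z)\neq a$ there. Hence, applying \cite[Theorem 6]{Andersson2} to $f-a$ and adding back $a$, for each value $a$ with $f(z)\neq a$ on $K^\circ$ and each $\delta>0$ there is a polynomial within $\delta$ of $f$ on $K$ that omits $a$ on all of $K$.

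By Mergelyan's theorem I would then fix a degree $d$ and a closed ball $B\subset\C^{d+1}$ of coefficient vectors whose polynomials satisfy $\max_{z\in K}\abs{f(z)-p(z)}\le\varepsilon/2$, arranging in addition that $B$ contains polynomials omitting $A$ on $K^\circ$. Enumerating $A=\{a_1,a_2,\dots\}$, I would consider for each $n$ the set $O_n=\{p\in B:\,a_n\notin p(K)\}$, which is open because $K$ is compact. The point of working in the finite-dimensional $B$ is that it is a Baire space in which \emph{every} element is already a polynomial; thus if each $O_n$ were dense in a suitable complete subset of $B$, then $\bigcap_n O_n$ would be residual, hence nonempty, and any $p$ in it would be a polynomial of degree at most $d$ with $\max_{z\in K}\abs{f(z)-p(z)}\le\varepsilon/2<\varepsilon$ and $p(K)\cap A=\emptyset$, which is exactly the assertion.

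The crux is the density of each $O_n$. Given $p$ in the relevant family, I want to perturb it, within $B$, to a polynomial omitting $a_n$ on all of $K$; this is precisely the single-value statement of the first paragraph and requires the interior hypothesis $p(z)\neq a_n$ on $K^\circ$. Uniform closeness to $f$ does not supply this: on $K^\circ$ the function $f$ omits $A$, but $f$ may take values of $A$ on $\partial K$, so the open image $p(K^\circ)$ of a nearby polynomial can sweep across such boundary values and meet $A$. Moreover interior attainment is rigid: if $p-a_n$ has a simple zero in $K^\circ$, Rouch\'e forces every nearby polynomial to attain $a_n$ there as well, so one cannot hope to push $a_n$ out of the interior image by a small perturbation. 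The proposal is therefore to carry, as a standing invariant, that the polynomials under consideration omit all of $A$ on $K^\circ$, realised by choosing the perturbations furnished by \cite[Theorem 6]{Andersson2} to be concentrated near $\partial K$---using the positive separation of the finitely many Jordan domains and the fact that on compact subsets of $K^\circ$ the image of $f$ already lies in the open set $\C\setminus A$---so that the interior image stays inside $\C\setminus A$ throughout.

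I expect the main obstacle to be precisely this reconciliation of the interior condition with the category framework: the restriction that a polynomial omit $A$ on the open set $K^\circ$ is neither open nor closed, so confining the argument to such polynomials while retaining a complete space on which the sets $O_n$ are dense is delicate, and it is here that the Jordan-domain structure, rather than a soft application of Mergelyan's theorem, must be exploited. Once the density of each $O_n$ within the correct complete space is established, the finite-dimensional Baire argument closes the proof: it delivers a single polynomial, approximating $f$ to within $\varepsilon$, that omits every value of $A$ on all of $K$---on the boundary, where the omission is created by the zero-free theorem, and in the interior, where it is inherited from $f$ and preserved by the construction.
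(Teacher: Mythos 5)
Your plan correctly isolates the central difficulty, but it does not overcome it, and the two devices you propose for doing so both fail. The Baire argument needs each $O_n=\{p\in B:\, a_n\notin p(K)\}$ to be dense in a complete subset of $B$, and, as you yourself observe via Rouch\'e, this is false in general: the hypotheses allow $f$ to take the value $a_n$ on $\partial(K^\circ)$, in which case polynomials near $f$ can attain $a_n$ at nearby interior points \emph{robustly}, so $B\setminus O_n$ can have nonempty interior. Your first repair, restricting to polynomials that omit $A$ on $K^\circ$ as a ``standing invariant'', abandons completeness (you concede this set is neither open nor closed), so Baire no longer applies; your second repair, perturbations ``concentrated near $\partial K$'', is not available for polynomials: any polynomial perturbation that changes values on $\partial(K^\circ)$ changes them comparably at adjacent interior points, which is exactly where the danger sits. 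There is also an outright error: you invoke ``the open set $\C\setminus A$'', but for countable $A$ this set need not be open --- $A$ may be dense (e.g.\ $A=\Q+i\Q$), which is precisely why omitting $A$ is never an open condition and why no soft invariance argument can preserve it under perturbation.

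What is missing are the two ideas by which the paper decouples the interior from the boundary. First, on the boundary one asks only for avoidance on $\partial K$, not on all of $K$: since $\partial K$ has empty interior, a polynomial can be made to omit a single value $a$ on $\partial K$ by an arbitrarily small perturbation --- translate the zeros of $p-a$ off $\partial K$ (Lemma \ref{LE1}); no zero-free Mergelyan theorem and no Rouch\'e obstruction enters, because there are no interior points to protect. The countable set is then handled by iterating this with margins $\delta_j$ and step sizes $\eps_j<\min(\delta_j,\eps_{j-1}/2)$ inside the finite-dimensional space of polynomials of bounded degree (Lemma \ref{LE2}); this is essentially the finite-dimensional completeness argument you envisioned, but applied where density actually holds. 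Second, the interior is secured not by perturbation but by a quantitative margin: composing with rescaled Riemann maps, $g(z)=f(\phi_j^{-1}((1-\xi_j)\phi_j(z)))$ (using Carath\'eodory's theorem on each Jordan domain $O_j$), one replaces $f$ by a nearby $g$ whose values on all of $\overline{K^\circ}$ lie in $f(K^\circ)$, an open set disjoint from $A$ and hence from $\overline{A}$; compactness then gives $\operatorname{dist}\bigl(g(\overline{K^\circ}),\overline{A}\bigr)=\delta>0$, so \emph{every} polynomial within $\delta$ of $g$ on $\overline{K^\circ}$ automatically omits $A$ there, with no interior hypothesis to maintain. With these two steps the theorem follows from Mergelyan's theorem and the triangle inequality; this rescaling is where the Jordan-domain hypothesis is genuinely used, and it is the ingredient your proposal lacks. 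Note also that the paper never needs \cite[Theorem 6]{Andersson2} as a black box, which is fortunate, since using it forces exactly the interior condition that creates your obstruction.
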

    In the proof of Theorem \ref{TH1} (see section \ref{sec4}), we  use similar arguments as in the proof of the corresponding result for the zero-free case \cite[Theorem 6]{Andersson2}. In particular  we use rescaling of Riemann mappings and the Carath\'{e}odory theorem. However,  to treat the parts of the set $K$ that are not in $\overline{K^\circ}$ we need some more complicated argument,   see Lemma \ref{LE2} in section \ref{sec3}. With Conjecture \ref{conj1} in mind we may ask if the corresponding result holds in the finite or countable case.
    \begin{ques} \label{q1}
       Does Conjecture \ref{conj1} hold \begin{enumerate}\itemsep0em  \item  if we replace ``zero-free'' with  ``avoid any set $A$ with two elements''? \item  if we replace ``zero-free'' with  ``avoid any finite set $A$''? \item if we replace ``zero-free'' with  ``avoid any countable set $A$''?
       \end{enumerate} 
     \end{ques}
     It is clear that the truth of Conjecture \ref{conj1} does not change if we replace ``zero-free'' with ''avoid any set $A$ with one element'', since we may always consider a shifted function (see proof of Lemma \ref{LE1} in section \ref{sec3}). While we have managed to answer Question \ref{q1} in the affirmative in the case when the interior of $K$ is the union of finitely many separated Jordan domains, by using the proof method of  \cite{Andersson2}, it is not clear to us how the methods of Gauthier-Knese \cite{GauthierKnese}, Andersson-Gauthier \cite{AnderssonGauthier} or Khruschev \cite{Khru} would generalize to this problem. In fact we do not even know how to treat the case when $A$ is a set with two elements and $K$ is the union of two closed discs that intersects at one point. We suggest this as an open problem.
     \begin{prob} Let $K=\{z:|z+1| \leq 1\} \cup \{z:|z-1| \leq 1\}$. Prove or give a counterexample:  For any  $\varepsilon>0$ and continuous function $f$ on $K$ that is analytic on $K^\circ$ such that $f(z) \not \in \{0,1\}$ if $z \in K^\circ$ there exists some polynomial $p$ such that \begin{gather*} \max_{z \in K} \abs{f(z)-p(z)}<\varepsilon,\end{gather*} and such that $p(z) \not \in \{0,1\}$ if $z \in K$.
        \end{prob}
  In the same way that the Lavrent\'ev  theorem \cite{Lavrentiev} is a direct consequence of the Mergelyan theorem \cite{Mergelyan}, for compact sets $K$ with empty interior, Theorem \ref{TH1} gives us the following version\footnote{See \cite[Sats 3.2.2]{Rousu}. This result also follows easily by using  Lavrent\'ev's theorem  to approximate $f$ by a polynomial $q$ and then using Lemma \ref{LE2} to approximate the polynomial $q$ by the polynomial $p$.}   of Lavrent\'ev's theorem  which generalizes the zero-free version  \cite[Theorem 1.1]{Andersson}. 
 \begin{thm} \label{TH2}
  Let $A \subset \C $ be any countable set, let $K \subset \C$ be  a compact set with connected complement and without interior points, and let $f$ be a continuous function on $K$. Then  given any $\varepsilon>0$ there exists some polynomial $p$ such that $p(z) \not \in A$ if $z \in K$ and such that
  \[
    \max_{z \in K} \abs{f(z)-p(z)}<\varepsilon.
  \]
 \end{thm}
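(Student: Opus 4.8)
The plan is to prove Theorem~\ref{TH2} in two stages: first replace $f$ by a nearby polynomial using the classical Lavrent\'ev theorem \cite{Lavrentiev}, and then perturb that polynomial slightly so that it avoids the countable set $A$ on $K$, exactly realizing the route indicated via Lemma~\ref{LE2}. Concretely, since $K$ is compact with connected complement and empty interior, Lavrent\'ev's theorem yields a polynomial $q$ with $\max_{z\in K}\abs{f(z)-q(z)}<\eps/2$. It then suffices to produce a polynomial $p$ with $\max_{z\in K}\abs{p(z)-q(z)}<\eps/2$ and $p(z)\notin A$ for all $z\in K$; the triangle inequality finishes the proof. Thus the entire content of the theorem is pushed into the second, perturbative step, and the analyticity hypothesis plays no role here because $K^\circ=\emptyset$.

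For the perturbative step I would argue by Baire category in a finite-dimensional space of polynomials. Fix an integer $n\ge\deg q$ and regard the affine space $q+P_n$ of polynomials of degree at most $n$ as a complete metric space under the coefficient norm (equivalently, for small perturbations, under $\max_{z\in K}\abs{\cdot}$). For each $a\in A$ set
\[
  B_a=\Bigl\{\,p\in q+P_n:\ a\in p(K)\,\Bigr\}.
\]
Since $p\mapsto\min_{z\in K}\abs{p(z)-a}$ is continuous, each $B_a$ is closed. If I can show that every $B_a$ has empty interior, then each $B_a$ is nowhere dense, $\bigcup_{a\in A}B_a$ is meager, and the Baire category theorem guarantees a polynomial $p$ in any prescribed small ball about $q$ that lies in no $B_a$; such a $p$ satisfies $p(K)\cap A=\emptyset$ and, by taking the ball small enough, $\max_{z\in K}\abs{p(z)-q(z)}<\eps/2$.

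The main obstacle is therefore to show that each $B_a$ has empty interior, and this is where the hypothesis $K^\circ=\emptyset$ is used decisively. Given $p_0\in B_a$, I would perturb only the constant term, replacing $p_0$ by $p_0+t$; then $a\in(p_0+t)(K)$ if and only if $t\in a-p_0(K)$, so it suffices to know that $a-p_0(K)$, and hence $p_0(K)$, has empty interior for every non-constant $p_0$ (the constant case being immediate). This reduces everything to the fact that \emph{a non-constant polynomial maps a compact set of empty interior to a set of empty interior}. To see this, note that $K$, being closed with empty interior, is nowhere dense; a non-constant polynomial has finitely many critical points (the zeros of $p_0'$), is locally biholomorphic away from them, and is an open map. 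Covering $K$ minus its finitely many critical points by countably many open sets on which $p_0$ is a homeomorphism onto an open set, the image $p_0(K)$ becomes a countable union of homeomorphic images of nowhere-dense sets (each nowhere dense) together with the finitely many critical values; hence $p_0(K)$ is meager in $\C$ and so has empty interior. I expect this topological step to be the crux, and it is precisely the place where the one-parameter perturbation fails once $K$ has interior, which is why Theorem~\ref{TH1} requires the Riemann-mapping and Carath\'eodory machinery underlying Lemma~\ref{LE2}.
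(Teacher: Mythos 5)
Your proof is correct, but your avoidance step takes a genuinely different route from the paper's. The paper obtains Theorem~\ref{TH2} as the empty-interior special case of Theorem~\ref{TH1}, or equivalently (as its footnote indicates, and as you do) by first applying Lavrent\'ev's theorem and then invoking Lemma~\ref{LE2}, which applies here because $\partial K=K$ when $K^\circ=\emptyset$; Lemma~\ref{LE2} is in turn proved by iterating Lemma~\ref{LE1} --- perturbing the roots of $p_{j-1}-a_j$ slightly off $\partial K$ --- with explicit bookkeeping $\eps_j<\min(\delta_j,\eps_{j-1}/2)$ that makes $\{p_j\}$ a Cauchy sequence in the finite-dimensional space of polynomials of degree at most $m$. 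You replace that iteration by a Baire category argument in $P_n$: each bad set $B_a$ is closed (by compactness of $K$) and nowhere dense, because perturbing only the constant term moves a polynomial off $B_a$, which rests on your topological crux that a non-constant polynomial maps a compact set with empty interior to a compact set with empty interior (local homeomorphism off the finitely many critical points, a countable covering, and Baire again). That crux is argued correctly, and it is essential that you argue by category rather than by measure, since $K$ may have positive area --- the paper's fat-Cantor-set examples $K_1,K_2,K_3$ make this point. As for what each approach buys: the paper's construction is elementary and self-contained, and Lemma~\ref{LE2} is stated for \emph{arbitrary} compact $K$ (giving avoidance on $\partial K$), which is exactly the form needed later in the proof of Theorem~\ref{TH1}; your argument is shorter and conceptually cleaner for the empty-interior case, essentially packaging the paper's hand-made Cauchy iteration into the Baire category theorem, and since $\partial K$ has empty interior for \emph{every} compact $K$, running your argument on $\partial K$ instead of $K$ would in fact reprove Lemma~\ref{LE2} in full generality, so nothing is really lost. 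Two minor points: your parenthetical claim that the coefficient norm and $\max_{z\in K}\abs{\cdot}$ are equivalent on $P_n$ fails when $K$ is a finite set (the latter is then only a seminorm), but this is harmless since your argument only needs the coefficient norm; and your closing attribution is slightly off --- the Riemann-mapping/Carath\'eodory machinery belongs to the proof of Theorem~\ref{TH1} itself, where it handles $K^\circ$, not to Lemma~\ref{LE2}, whose proof is the root-perturbation iteration described above.
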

 
 This result
 is stronger when $K$ is a larger set. Examples of large sets  (in terms of area measure) that satisfies the conditions of the theorem are $K_1=S+iS$, $K_2=[0,1]+iS$ and $K_3=[0,1] e^{2 \pi i S}$, where $S \subset [0,1]$ is a fat Cantor set. In fact we can choose $S$ to have one-dimensional measure arbitrarily close to $1$ so that $K_j \subset [0,1]+i[0,1]$ for $j=1,2$ have area measure arbitrarily close to $1$. The result is also stronger if $A$ is a larger set, such as a dense set in $\C$. An example of a dense set in $\C$ is when $A=\Q+\Q i$ is the set of rational complex numbers. Even stronger, if we use $A$ as the set of (complex) algebraic numbers in Theorem \ref{TH2} we obtain the following result on approximation by a polynomial that only takes transcendental values on the set $K$.

 \begin{cor} \label{COR1}
Let $K \subset \C$ be  a compact set with connected complement and without interior points, and let $f$ be a continuous function on $K$. Then  given any $\varepsilon>0$ there exists some polynomial $p$ such that
  $$
    \max_{z \in K} \abs{f(z)-p(z)}<\varepsilon,
  $$
  and such that  $p(z)$ is transcendental if $z \in K$.
 \end{cor}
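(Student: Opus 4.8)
The plan is to obtain Corollary \ref{COR1} as an immediate specialization of Theorem \ref{TH2}, taking the countable set $A$ to be the set of all complex algebraic numbers. The only point requiring any verification is that this set is genuinely countable, so that the hypotheses of Theorem \ref{TH2} are met; everything else is a matter of unwinding definitions.

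First I would recall the classical fact that the set $A$ of complex algebraic numbers is countable. A number is algebraic precisely when it is a root of some nonzero polynomial with integer coefficients. The set of such polynomials is a countable union over the degree $n$ of the coefficient tuples in $\mathbb{Z}^{n+1}$, hence countable, and each nonzero polynomial of degree $n$ has at most $n$ roots in $\C$. Thus $A$ is a countable union of finite sets, and is therefore countable.

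With this in hand, I would apply Theorem \ref{TH2} to the given $K$ and $f$, with $A$ the set of algebraic numbers just described. Since $K$ has connected complement and empty interior and $f$ is continuous on $K$, the hypotheses of Theorem \ref{TH2} hold, and the theorem produces, for any $\varepsilon>0$, a polynomial $p$ with $\max_{z \in K}\abs{f(z)-p(z)}<\varepsilon$ and with $p(z) \not\in A$ for every $z \in K$. By definition a complex number is transcendental exactly when it is not algebraic, so the condition $p(z) \not\in A$ is literally the assertion that $p(z)$ is transcendental for each $z \in K$, which yields the corollary.

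There is no real obstacle here: the entire analytic content is already contained in Theorem \ref{TH2}, and the passage to transcendental values is purely the observation that the algebraic numbers form a countable set together with the definition of ``transcendental''.
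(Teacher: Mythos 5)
Your proposal is correct and is exactly the paper's own argument: the corollary is derived there by applying Theorem \ref{TH2} with $A$ taken to be the (countable) set of complex algebraic numbers, just as you do. Your explicit verification that the algebraic numbers are countable is a welcome, if standard, addition that the paper leaves implicit.
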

 While our results hold for a countable set $A$ it would be interesting to know whether they hold for some uncountable set $A$. We do not consider this problem here, but  the following result gives some restriction on how we can choose the sets $K$ and $A$ if we want our approximation results to hold.
 
 \begin{prop} 
   If we remove the condition that $A$ is countable in  Theorem \ref{TH2}  then the  conclusion of Theorem \ref{TH2} is false if $K$ has some non trivial path-connected component  and $A$ has some non trivial connected component.
 \end{prop}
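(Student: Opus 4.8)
The plan is to reduce the statement to a purely topological crossing phenomenon on a single arc. Since $K$ has empty interior, the hypotheses of Theorem \ref{TH2} impose no constraint on $f$ beyond continuity, so we are free to build a bad $f$ and a bad $\eps$. Let $L$ be a non-trivial path-connected component of $K$; as $L$ contains two distinct points joined by a path, it contains a genuine arc, i.e.\ an injective continuous $\gamma\colon[0,1]\to K$ with image $J=\gamma([0,1])$. Let $C$ be a non-trivial connected component of $A$ and fix $a\neq b$ in $C$. Applying the affine homeomorphism $T(w)=(w-a)/(b-a)$ and replacing $(f,A)$ by $(T\circ f,\,T(A))$—which preserves the problem, since $T\circ p$ is again a polynomial and $T$ preserves countability and connectedness—we may assume $0,1\in C$. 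As $C$ is connected, $\mathrm{Re}(C)\supseteq[0,1]$.

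Next I would set up the obstruction. Choosing a large $M>0$, I form the rectangle $R=\{x+iy:0\le x\le 1,\ |y|\le M\}$ and extract from $\overline{C}$ a compact connected set $\Gamma\subseteq R$ meeting both vertical edges $\{x=0\}$ and $\{x=1\}$ of $R$ (a left--right crossing continuum); such a $\Gamma$ exists by a boundary-bumping argument applied to $\overline{C}$, which joins $0$ on the left edge to $1$ on the right edge. I then define the target path to cross $R$ in the transverse direction, e.g.
\[
  F_0(t)=\tfrac12+i(2t-1)(M+1),
\]
so that $F_0(0)$ lies below $R$ and $F_0(1)$ above it, with constant real part $\tfrac12$. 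I set $f$ on $J$ by $f(\gamma(t))=F_0(t)$ (well defined since $\gamma$ is injective), extend $f$ continuously to $K$ by Tietze's theorem, and fix $\eps\in(0,\tfrac12)$.

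The heart of the argument is the plane crossing lemma. If $p$ is any polynomial with $\max_{z\in K}\abs{f(z)-p(z)}<\eps$, then $G_0:=p\circ\gamma$ satisfies $\max_{[0,1]}\abs{F_0-G_0}<\eps$, so that $\mathrm{Re}\,G_0\in(\tfrac12-\eps,\tfrac12+\eps)\subseteq(0,1)$ throughout, while $\mathrm{Im}\,G_0$ runs from below $-M$ to above $M$. Hence a sub-arc of $G_0$ joins the bottom edge of $R$ to its top edge inside $R$, and by the crossing lemma it must meet the left--right continuum $\Gamma$. Thus $p(\gamma(t_0))\in\Gamma\subseteq\overline{C}$ for some $t_0$, and if the configuration is chosen so that this forced value lies in $C$ (hence in $A$), this contradicts $p(K)\cap A=\emptyset$ and shows that the conclusion of Theorem \ref{TH2} fails.

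The main obstacle is exactly the last clause. The crossing lemma only forces $G_0$ to meet the \emph{closure} $\overline{C}$, whereas we need it to meet $A$, that is, $C$ itself. When $C$ is closed (in particular a non-degenerate continuum, which covers the natural examples, such as $A$ containing a segment or an arc) one has $\Gamma\subseteq\overline{C}=C\subseteq A$ and the argument closes immediately. For a general, possibly non-closed or wild connected component the difficulty is that $\overline{C}\setminus C\subseteq\C\setminus A$, so one must position the transverse path $F_0$ and choose $\Gamma$ so that every $\eps$-perturbation is driven through a point of $C$ and not merely through a boundary point of $\overline{C}$. Handling this, together with the extraction of the crossing continuum $\Gamma$ inside a bounded rectangle when $C$ is unbounded, is where the technical work will concentrate.
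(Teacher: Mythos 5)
Your argument is a \emph{hitting} argument: you force the perturbed path $p\circ\gamma$ to meet a crossing continuum $\Gamma\subseteq\overline{C}$, and then you need the meeting point to lie in $A$ itself. As you concede in your final paragraph, that last step is missing, and it is not a routine technicality to be deferred: a connected component $C$ of $A$ is closed in $A$ but need not be closed in $\C$ (indeed $\overline{C}\cap A=C$, so $\overline{C}\setminus C$ is disjoint from $A$), and nothing in the crossing lemma prevents $p\circ\gamma$ from meeting $\overline{C}$ only at points of $\overline{C}\setminus C$. Since the hypothesis allows $C$ to be an arbitrary nondegenerate connected set --- not a continuum, possibly wildly non-locally-connected (a Knaster--Kuratowski fan, say), possibly unbounded --- neither of the two difficulties you flag (driving the hit into $C$ rather than merely $\overline{C}$, and extracting a crossing continuum of $\overline{C}$ inside a bounded rectangle when $C$ is unbounded) has an evident resolution. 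The proof is therefore incomplete exactly at the point where the hypothesis ``$A$ has a nontrivial connected component'' has to do its work.

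The paper sidesteps both problems by replacing the hitting argument with a \emph{separation} argument. There the model curve traced by $f$ on the arc $B\subseteq K$ is not a transverse segment but a self-intersecting loop that winds around $a_1$ and leaves $a_2$ outside. For small $\eps$, the crossed-arcs lemma forces the image $p(B)$ to self-intersect near the crossing point, hence to contain a Jordan curve surrounding $a_1$ but not $a_2$. Now the hypothesis that $p$ omits $A$ on $K$ is used directly: $A\subseteq p(K)^\complement\subseteq p(B)^\complement$ lies in the complement of that Jordan curve, so the component of $A$ containing $a_1$ and $a_2$ would be a connected set meeting both complementary domains of a Jordan curve, contradicting the Jordan curve theorem. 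The contradiction comes from the connectedness of $C$ alone; no point of $A$ in the image of $p$ is ever produced, so the closure issue and the unboundedness issue simply never arise. If you want to salvage your approach, this is the modification to make: do not try to force $p(K)$ to hit $A$; force $p(B)$ to separate two points of one component of $A$.
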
 

 \begin{proof}
     We  give a proof by contradiction, by  assuming that the conclusion of  Theorem \ref{TH2} holds for this choice of $A$ and $K$. Since $K$ has some non trivial path-connected component and by using the fact that a path-connected set in the complex plane is arc-connected\footnote{\url{http://mathworld.wolfram.com/Arcwise-Connected.html}} we can find two points $z_1,z_2 \in K$ such that $z_1\neq z_2$, and some simple curve (Jordan arc) $B \subseteq  K$ with parametrization $\beta:[0,1] \to \C$ such that $\beta(0)=z_1$, $\beta(1)=z_2$ and $\beta([0,1])=B$. 
       Now let $a_1,a_2 \in A$ be such that $a_1 \neq a_2$ and such that $a_1$ and $a_2$ are contained in the same connected component of $A$. Let us consider the smooth curve $\Gamma$  with parametrization $\gamma:[0,1] \to \C$ that is given in the figure below,
       \vskip 6pt
      \begin{tikzpicture}
     \filldraw[gray!20,fill=gray!20] (0:1.5) arc(0:270:1.5)--(270:2.5) arc(270:0:2.5)--cycle;
     \fill[gray!20]  (2.507,-4.0)  -- (-0.4,0) arc (-180:0: 0mm) 
                        -- (2.507,-4.0)    arc (0:-180:5.07mm);
                        \fill[gray!20]  (4,-2.51)  -- (-0.4,0) arc (-90:90: 0mm) 
                        -- (4,-2.51)    arc (-90:90:5.07mm);
      \filldraw[gray!20,fill=gray!20] (1.5,0) rectangle (2.5,-4); 
\filldraw[gray!20,fill=gray!20] (0,-1.5) rectangle (1.5,-2.5);
\filldraw[gray!20,fill=gray!20] (2.5,-1.5) rectangle (4,-2.5);
\draw[ultra thick](0:2) arc(0:270:2);
\draw[ultra thick](2,0)--(2,-4); 
\draw[ultra thick](0,-2)--(4,-2);
\draw[dashed](1.5,-1.5)--(2.5,-1.5);
\draw[dashed](1.5,-2.5)--(2.5,-2.5);
\draw[dashed](1.5,-1.5)--(1.5,-2.5);
\draw[dashed](2.5,-1.5)--(2.5,-2.5);
\draw [ultra thick, red, opacity=0.6] plot [smooth] coordinates {  (1.8,-3.9) (1.7,-3) (1.9,-1.9) (2.1,0) (1,1.5) (0,1.7) (-0.85,1.6)  (-0.6,1.45) (-0.4,2.2) (-1.3,1.2) (-1.8,0) (-1.3,-1.25) (1.9,-1.9)  (3,-1.6) (4.2,-1.8)  };
\draw[dashed](1.9,-1.9)--(3.9,1.0);
\fill (0,0)  circle[radius=2pt]; 
\fill (-1,-3)  circle[radius=2pt];
\foreach \Point/\PointLabel in {(0,0)/a_1, (-1,-3)/a_2}
\draw[fill=black] \Point circle (0.05) node[above right] {$\PointLabel$};
\node[draw] at (4.3,1.4) {Intersection point};
\draw[ultra thick](5.2,0)--(7.2,0);
\draw[red, ultra thick,opacity=0.6](5.2,-1)--(7.2,-1);
\filldraw[gray!20,fill=gray!20] (5.2,-3.0) rectangle (7.2,-2.0);
\node[draw] at (8.4,0) {$f(B)=\Gamma$};
\node[draw] at (8.0,-1) {$p(B)$};
\node[draw] at (9.5,-2.5) {$\varepsilon-$neighborhood of $\Gamma$};
\end{tikzpicture}
\vskip 6pt

   \noindent   and let the continuous function $f:K \to \C$ be defined by $f(z)=\gamma(\beta^{-1}(z))$ if $z \in B$, such that $f(B)=\Gamma$ and by the Tietze extension theorem \cite[Theorem 20.4]{Rudin}  as any continuous extension to $K$.     
      Let us now use our assumption that the conclusion of Theorem \ref{TH2} holds and construct a polynomial $p$ such that $p(z) \not \in A$ if  $z \in K$ and such that  
        \begin{gather*}
           \abs{f(z)-p(z)}<\varepsilon, \qquad z \in K,
         \end{gather*}
        holds.        If $\varepsilon>0$ is sufficiently small, then $p(B)$   contains a Jordan curve $J \subseteq p(B)$  surrounding $a_1$ but not $a_2$. This is a consequence of the fact that the curve $p(B)$  follows the $\varepsilon$-neighborhood of $\Gamma$ (grey in figure) and by the crossed arcs lemma\footnote{\url{http://www.cut-the-knot.org/blue/JCT/JCT\_Part4.shtml}} must intersect in the marked square  with side $2 \varepsilon$ (see figure) centered at the intersection point of the curve $\Gamma$.  It follows by the Jordan curve theorem that $J^\complement$ is  not connected and that $a_1$ and $a_2$ do not lie in the same connected component of $J^\complement$. Since $A \subseteq p(K)^\complement \subseteq p(B)^\complement  \subseteq J^\complement$ this contradicts our assumption that $a_1$ and $a_2$ lie in the same connected component of $A$.
    \end{proof}

\section{Some lemmas on polynomials approximating polynomials \label{sec3}}
In order to prove Theorem \ref{TH1} we need some useful lemmas on polynomial approximation. In fact we only need Lemma \ref{LE2}, but in order to prove Lemma \ref{LE2} we need the following Lemma. 
\begin{lem}
 \label{LE1}
  Let $p$ be a polynomial and let $K$ be a compact subset of $\C$. Then given any $\varepsilon>0$ and any complex number $a$ there exists some polynomial $q$  of the same degree as $p$
   such that
  $$
   \max_{z \in K} \abs{p(z)-q(z)}<\varepsilon,
  $$
  and such that $q(z) \neq a$ for $z \in \partial K$.
 \end{lem}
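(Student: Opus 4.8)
The plan is to reduce the problem to avoiding the value $0$ and then to perturb the zeros of the resulting polynomial off the boundary $\partial K$. First I would set $\tilde p = p - a$. If I can produce a polynomial $\tilde q$ of the same degree as $\tilde p$ with $\max_{z \in K}\abs{\tilde p(z)-\tilde q(z)} < \varepsilon$ and with no zeros on $\partial K$, then $q = \tilde q + a$ has the same degree as $p$, satisfies $\max_{z \in K}\abs{p(z)-q(z)} < \varepsilon$, and satisfies $q(z) = a$ only at the zeros of $\tilde q$, so that $q(z) \neq a$ for $z \in \partial K$, as required. The degenerate constant cases are immediate: if $p$ is a constant different from $a$ then already $q = p$ works, and if $p \equiv a$ then $q \equiv a + \varepsilon/2$ works. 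Hence I may assume that $\tilde p$ has degree $n \geq 1$.

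For this main case write $\tilde p(z) = c \prod_{j=1}^{n}(z - z_j)$ with $c \neq 0$ and $z_1,\dots,z_n$ the zeros of $\tilde p$ listed with multiplicity. The key observation is that $\partial K$ is nowhere dense: as the boundary of the closed set $K$ it is itself closed and has empty interior, since any open set contained in $K$ lies in $K^\circ$, which is disjoint from $\partial K$. Thus the complement of $\partial K$ is open and dense, and for each $j$ I can choose a point $w_j \notin \partial K$ as close to $z_j$ as I like. Setting $\tilde q(z) = c \prod_{j=1}^{n}(z - w_j)$ then gives a polynomial of the same degree $n$ as $\tilde p$ whose only zeros are the points $w_j$, none of which lie on $\partial K$; hence $\tilde q$ has no zeros on $\partial K$.

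It remains to make the approximation bound hold. Since $\abs{\tilde p(z)-\tilde q(z)} = \abs{c}\,\abs{\prod_{j}(z-z_j) - \prod_{j}(z-w_j)}$ and all the linear factors are bounded on the compact set $K$, a standard telescoping estimate on the difference of the two products — equivalently, the continuity of the coefficients of a polynomial as functions of its roots — shows that $\max_{z \in K}\abs{\tilde p(z)-\tilde q(z)}$ can be made smaller than $\varepsilon$ by taking $\max_j \abs{z_j - w_j}$ sufficiently small. Choosing the $w_j$ this close, and recalling the reduction to $\tilde q$ above, completes the construction. The only non-formal ingredient in this argument is the observation that $\partial K$ has empty interior, which is exactly what permits each zero of $\tilde p$ to be nudged off the boundary without being displaced far; the remaining work is the routine continuity of the root-to-coefficient map, which I would not expect to cause any difficulty.
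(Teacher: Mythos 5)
Your proposal is correct and follows essentially the same argument as the paper: shift by $a$ to reduce to the zero-free case, nudge each root off $\partial K$ using the fact that the boundary of a closed set has empty interior, and conclude by continuity of the coefficients in the roots. The only difference is that you explicitly dispose of the degenerate case $p \equiv a$ (where the factorization argument breaks down), a detail the paper's proof passes over silently.
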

The following proof is a shifted variant of  the proof of  \cite[Theorem 1.1]{Andersson}.
 \begin{proof}
Let $g(z)=p(z)-a$ be such that \[g(z)=c_0\prod_{k=1}^m (z-z_k)\] where $z_k$ denotes the zeros of $g(z)$. 
Since $\partial K$ has no interior points there exist sequences $z_{k,n}\in (\partial K)^\complement$ such that $z_{k,n}\rightarrow z_k$. 
Let \begin{equation}
g_n(z)=c_0\prod_{k=1}^m (z-z_{k,n}). 
\end{equation}
Since $z_{k,n}\in (\partial K)^\complement$ we obtain $g_n(z)\neq 0$ for $z\in \partial K.$ 
Since the coefficients converge it is clear that $g_n(z)$ converges uniformly to $g(z)$ on $K$. Hence, there is some $n$ such that \begin{gather} \label{ineq} |g_n(z)-g(z)|<\varepsilon,\qquad z\in K. \end{gather}
Let $q(z)=g_n(z)+a.$ Since $g_n(z)\neq 0$ for $z\in \partial K$ it follows that $q(z) \neq a$ if $z \in \partial K$,
and since $p(z)-q(z)=g(z)-g_n(z)$ it follows from \eqref{ineq} that \[|p(z)-q(z)|<\varepsilon,\qquad z\in K.\]
\end{proof}

\begin{lem} \label{LE2}
  Let $q$ be a polynomial and let $K$ be a compact subset of $\C$. Then given any $\varepsilon>0$ and any countable set $A \subset \C$  there exists some polynomial $p$  such that 
  $$
   \max_{z \in K} \abs{q(z)-p(z)}<\eps,
  $$
  and such that $p(z) \not \in A$ for $z \in \partial K$.
 \end{lem}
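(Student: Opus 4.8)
The plan is to bootstrap Lemma~\ref{LE1}, which removes a \emph{single} forbidden value on $\partial K$, into a statement removing an entire countable set, by means of a Baire category argument. The obstruction to naively iterating Lemma~\ref{LE1} is that after perturbing $q$ to avoid $a_2 \in A$ on $\partial K$ we may well reintroduce the value $a_1$ we had already cleared; so no finite or sequential application suffices, and we must instead produce a single polynomial that is simultaneously good for every $a \in A$. This non-iterability is the main conceptual obstacle, and the Baire theorem is the device that overcomes it.

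First I would dispose of the degenerate cases. If $\partial K = \emptyset$ then $K=\emptyset$ (a nonempty compact clopen subset of the connected space $\C$ cannot exist), and the statement is vacuous with $p=q$. If $\partial K$ is a nonempty finite set, then writing $A_z := \{a - q(z) : a \in A\}$ for each of the finitely many $z \in \partial K$, the countable union $\bigcup_z A_z$ has dense complement, so a sufficiently small constant $c$ lying outside it yields $p := q+c$ with $|c|<\eps$ and $p(z)\notin A$ on $\partial K$. Hence I may assume $\partial K$, and therefore $K$, is infinite.

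Next I set up the arena. Let $d=\deg q$ and let $P_d$ be the finite-dimensional space of polynomials of degree at most $d$, equipped with the metric $\rho(p_1,p_2)=\max_{z\in K}\abs{p_1(z)-p_2(z)}$; since $K$ is infinite, $\rho$ is a genuine metric on $P_d$. Let $X=\{p\in P_d:\rho(p,q)\le\eps/2\}$, a closed ball in a finite-dimensional normed space, hence compact and in particular a complete metric space to which the Baire category theorem applies. For each $a\in A$ set
\[
  U_a=\{p\in X: a\notin p(\partial K)\}.
\]
I would then verify two properties. \emph{Openness of $U_a$ in $X$:} if $p\in U_a$ then $\delta:=\operatorname{dist}\bigl(a,p(\partial K)\bigr)>0$ because $p(\partial K)$ is compact, and any $p'$ with $\rho(p,p')<\delta$ satisfies $\abs{p'(z)-a}\ge\abs{p(z)-a}-\abs{p'(z)-p(z)}>0$ for all $z\in\partial K$, so $p'\in U_a$. \emph{Density of $U_a$ in $X$:} given $p$ in the open ball $\rho(p,q)<\eps/2$ and any $\eta>0$, apply Lemma~\ref{LE1} to $p$, the value $a$, and tolerance $\tfrac12\min\{\eta,\ \eps/2-\rho(p,q)\}$; the resulting polynomial $p'$ has the same degree as $p$, hence lies in $P_d$, satisfies $a\notin p'(\partial K)$ and $\rho(p,p')$ below that tolerance, so $p'\in U_a$ with $\rho(p',q)<\eps/2$ and $\rho(p,p')<\eta$. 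As this is possible for every $p$ in the dense open ball, each $U_a$ is dense in $X$.

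Finally I would invoke Baire: $\bigcap_{a\in A}U_a$ is a countable intersection of open dense subsets of the complete space $X$, hence dense and in particular nonempty. Any $p$ in this intersection has degree at most $d$, satisfies $\rho(p,q)\le\eps/2<\eps$, and has $p(z)\notin A$ for every $z\in\partial K$, which is exactly the desired conclusion. Beyond the non-iterability already flagged, the only points needing care are that fixing the degree is what makes $X$ complete, and that Lemma~\ref{LE1} already delivers its approximation in the same sup-norm $\rho$ used to topologize $X$, so the openness and density arguments are compatible.
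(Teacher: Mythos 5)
Your proof is correct, and it reaches the conclusion by a genuinely different (though closely related) route. The paper enumerates $A=\{a_1,a_2,\dots\}$ and iterates Lemma~\ref{LE1} explicitly: at stage $j$ it perturbs $p_{j-1}$ into a polynomial $p_j$ of the same degree $m$ that clears $a_j$ on $\partial K$ by a margin $\delta_j>0$, and it chooses the later tolerances recursively, $\eps_j<\min(\delta_j,\eps_{j-1}/2)$, so that the total subsequent perturbation can never destroy a clearance already won; the $p_j$ then form a Cauchy sequence in the finite-dimensional space of polynomials of degree at most $m$ with the sup-norm on $K$, and the limit is the required $p$. You package exactly this bookkeeping into the Baire category theorem: in the compact ball $X$ of polynomials of degree at most $d$ around $q$, each $U_a$ is open (by compactness of $p(\partial K)$) and dense (by one application of Lemma~\ref{LE1}), so the countable intersection is dense and in particular nonempty. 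The two arguments share their essential ingredients---Lemma~\ref{LE1} as the workhorse, and fixing the degree so that a complete finite-dimensional space is available---and the paper's recursion is precisely what the proof of Baire's theorem unwinds to in this situation; your opening observation that the values must be avoided simultaneously rather than sequentially is the same insight that forces the paper's condition $\eps_j<\delta_j$. What your version buys: no recursive tolerance bookkeeping, the marginally stronger conclusion that the admissible polynomials form a dense subset of the ball, and an explicit treatment of the degenerate cases $\partial K=\emptyset$ and $\partial K$ finite (where the sup-norm on $P_d$ is only a seminorm), a point the paper passes over silently when it asserts completeness. What the paper's version buys: it is self-contained and elementary, needing only the completeness of a finite-dimensional normed space rather than an appeal to Baire.
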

 \begin{proof} 
 Let $A=\{a_1,a_2,\dots\}$ and let $m$ be the degree of $q$. Let   $0<\varepsilon_0 <\varepsilon$ and  $p_0(z):=q(z)$. 
    For $j=1, 2, \ldots$ there is, according to Lemma \ref{LE1}, some polynomial $p_j$ of degree $m$ such that  
\begin{equation}\label{neq_a}
     \delta_j:= \min_{z \in \partial K} \abs{p_j(z)-a_j}>0, 
    \end{equation}
    and such that \begin{equation} \label{appr}
|p_j(z)-p_{j-1}(z)|<\frac{\eps_{j-1}}{2},\qquad   z \in K,
\end{equation} 
where $\eps_j>0$ for $j\geq 1$ is definied recursively so that 
\begin{equation} \label{epsj}
    \eps_j < \min \left(\delta_j,\frac{\eps_{j-1}} 2 \right).
\end{equation} 
By  the inequalities \eqref{appr}, \eqref{epsj}, and the triangle inequality we find for $0 \leq k <l$ that
\begin{gather}\label{Cauchy}
\begin{split}
\abs{p_l(z)-p_k(z)}&\leq |p_l(z)-p_{l-1}(z)|+ \cdots +|p_{k+1}(z)-p_{k}(z)| 
\\
&<\frac{\eps_{l-1}}{2}
+\cdots+\frac{\eps_k}{2}<\sum_{j=1}^{l-k}\frac{\eps_k}{2^j}<\sum_{j=1}^{\infty}\frac{\eps_k}{2^j}=\eps_k,
\end{split} \qquad z \in K.
\end{gather} 
 By \eqref{epsj} and \eqref{Cauchy} it follows that  $\{p_j\}_{j=1}^\infty$ is a Cauchy-sequence in the vector space of polynomials of degree at most $m$ equipped with the sup-norm on $K$. Since this space is complete then \begin{equation}\label{p} p(z)=\lim_{j\to \infty} p_j(z) \end{equation}   is a polynomial of degree at most\footnote{It is clear by the construction that the polynomial will in fact have degree exactly $m$.} $m$. 
The inequality \eqref{Cauchy} yields \begin{equation}\label{pj-p}
    |p(z)-p_j(z)|\leq \eps_j,\qquad j\geq 0,~z\in K.
\end{equation}
For the polynomial $p$ definied by \eqref{p}, the inequalites \eqref{neq_a},  \eqref{epsj}, \eqref{pj-p}  and the triangle inequality gives us
\begin{equation} \label{yy}
    |p(z)-a_j|\geq |p_j(z)-a_j|-|p(z)-p_j(z)| \geq \delta_j-\eps_j>0,\qquad z\in \partial K,
\end{equation} for all $a_j\in A.$
  The conclusion of our lemma follows by \eqref{pj-p} and \eqref{yy}, by recalling that $p_0(z)=q(z)$ and $0<\varepsilon_0<\varepsilon$.
 \end{proof}

\section{Proof of Theorem \ref{TH1} \label{sec4} }

 Since $A \subseteq (K^\circ)^\complement$ we also have $\overline A \subseteq \overline{(K^\circ)^\complement}=(K^\circ)^\complement.$ Thus we have   that $f(z) \not \in \overline{A}$ if $z \in K^\circ$. Now let  $K^\circ=\cup_{j=1}^n O_j$ where $O_j$ are Jordan domains such that $\overline{O_j} \cap \overline{O_i}=\emptyset$ if $i \neq j$. 
 Let $D= \{z \in \C: |z|<1\}$  denote the open unit disc. By the Carath{\'e}odory theorem\footnote{Also called the Carath{\'e}odory-Osgood-Taylor theorem since it was proved independently by  Carath{\'e}odory  \cite{Cara} and Osgood-Taylor \cite{Osgood}.} \cite[Theorem 14.19]{Rudin}, the  Riemann mappings $\phi_j: O_j \to D$ extend to homeomorphisms $\phi_j:  {\overline{O_j}} \to \overline{D}.$
It is clear that
$$f(z)=f(\phi_j^{-1}(\phi_j(z))), \qquad z \in \overline{O_j}. $$
 Let us now define
$$g(z)=f(\phi_j^{-1}((1-\xi_j)\phi_j(z))), $$ whenever $z \in \overline{O_j}$
for sufficiently small $0<\xi_j<1$ such that
\begin{gather*}  |g(z)-f(z)| \leq  \frac \varepsilon 3, \qquad  z \in \overline{O_j}. \end{gather*} 
The Tietze extension theorem \cite[Theorem 20.4]{Rudin} allows $g$ to be extended  to a continuous function on $K$ such that
\begin{gather} \label{ut2}
\abs{g(z)-f(z)} \leq \frac \varepsilon 3, \qquad  z \in K.
\end{gather} 
 By the construction it is clear that $g$ is continuous on $K$,  that $g$ is analytic on $K^\circ$ and that $g(z) \not \in \overline{A}$ if $z \in \overline{K^\circ}$. Thus
   the compact set $g(\overline{K^\circ})$ and the closed set $\overline{A}$ are disjoint and must hence be separated by a positive distance $\delta$ so that  
    \begin{gather}
        \abs{g(z)-a} \geq \delta, \qquad z \in \overline{K^{\circ}},\,  a \in  \overline{A}. 
      \label{Aa}
     \end{gather}
 By Mergelyan's theorem (see \cite{Mergelyan} or \cite[Theorem 20.5]{Rudin}) 
 we can choose a polynomial $q$ such that
\begin{gather} \label{ut3}
|q(z)-g(z)| < \min (\varepsilon/3,\delta/2), \qquad  z \in K.
\end{gather}
 By Lemma \ref{LE2} there exists some polynomial $p$ such that
      \begin{gather} \label{B}
      \abs{p(z)-q(z)}<\min \left(\varepsilon/3,\delta/2 \right), \qquad  z \in K,
    \end{gather}
    and such that $p(z) \not \in A$ if $z \in \partial K$. By the inequalities \eqref{Aa}, \eqref{ut3}, \eqref{B} and the triangle inequality it follows that also $p(z) \not \in A$ if $z \in K^\circ$. Thus $p(z) \not \in A$ if $z \in K$. Finally it follows from the  inequalities \eqref{ut2}, \eqref{ut3}, \eqref{B} and the triangle inequality that 
   \begin{gather*}
    \max_{z \in K} \abs{f(z)-p(z)} <\varepsilon.
  \end{gather*}
   \qed

\bibliographystyle{plain}

\end{document}